\theoremstyle{plain}
\newtheorem{theorem}{Theorem}
\theoremstyle{remark} 
\def\P{{\rm P}} 
\def\E{{\rm E}} 
\title{The flashing Brownian ratchet \\ and Parrondo's paradox}
\author{S. N. Ethier\thanks{Department of Mathematics, University of Utah, 155 S. 1400 E., Salt Lake City, UT 84112, USA. e-mail: ethier@math.utah.edu.}\; and Jiyeon Lee\thanks{Department of Statistics, Yeungnam University, 280 Daehak-Ro, Gyeongsan, Gyeongbuk 38541, South Korea.  e-mail: leejy@yu.ac.kr.}}
\date{}
\begin{document}

\maketitle

\begin{abstract}
A Brownian ratchet is a one-dimensional diffusion process that drifts toward a minimum of a periodic asymmetric sawtooth potential.  A flashing Brownian ratchet is a process that alternates between two regimes, a one-dimensional Brownian motion and a Brownian ratchet, producing directed motion.  These processes have been of interest to physicists and biologists for nearly 25 years.  The flashing Brownian ratchet is the process that motivated Parrondo's paradox, in which two fair games of chance, when alternated, produce a winning game.  Parrondo's games are relatively simple, being discrete in time and space.  The flashing Brownian ratchet is rather more complicated.  We show how one can study the latter process numerically using a random walk approximation.
\medskip\par

\noindent\textit{Key words and phrases}: Brownian motion, Brownian ratchet, random walk, Parrondo's paradox.
\end{abstract}

\section{Introduction}\label{intro}

The flashing Brownian ratchet was introduced by Ajdari and Prost \cite{AP92}; see also Magnasco \cite{M93}.  It is a stochastic process that alternates between two regimes, a one-dimensional Brownian motion and a Brownian ratchet, the latter being a one-dimensional diffusion process that drifts toward a minimum of a periodic asymmetric sawtooth potential.  The result is directed motion, as explained in Figure~\ref{HATP} (from Harmer et al. \cite{HATP01}) and Figure~\ref{Parrondo-Dinis} (from Parrondo and Din\'is \cite{PD04}).  Earlier versions of these figures appeared in Rousselet et al.\ \cite{RSAP94} and Faucheux et al.\ \cite{FBKL95}.  For another version see Amengual \cite[Fig.\ 2.3]{A06}.  

The flashing Brownian ratchet is of interest not just to physicists but also to biologists in connection with so-called molecular motors; see e.g.\ Bressloff \cite[Chap.\ 4]{B14}.  The flashing Brownian ratchet is the process that motivated Parrondo's paradox (Harmer and Abbott \cite{HA99,HA02}), in which two fair games of chance, when alternated, produce a winning game.  

\begin{figure}[!h]
\centering
\includegraphics[width=2.8in]{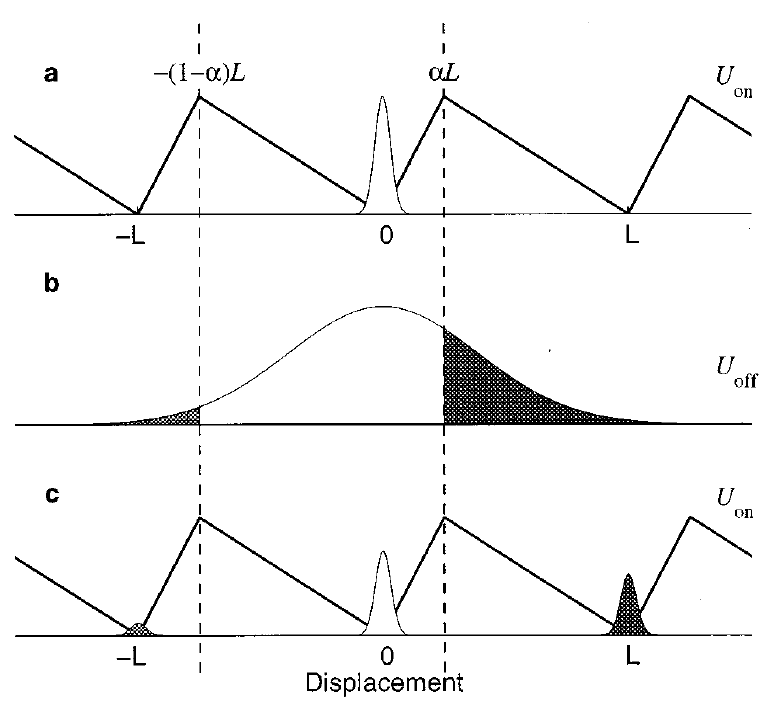}
\caption{Brownian ratchet mechanism. The sawtooth and flat potentials are labeled with $U_{\text{on}}$ and $U_{\text{off}}$, respectively, while the distribution of Brownian particles is shown via the normal curves. This sequence of flashing between on and off potentials shows there is a net movement of particles to the right. [Reprinted from Harmer et al. \cite{HATP01} with the permission of AIP Publishing.]}
\label{HATP}
\end{figure}

\begin{figure}[!h]
\centering
\includegraphics[width=3in]{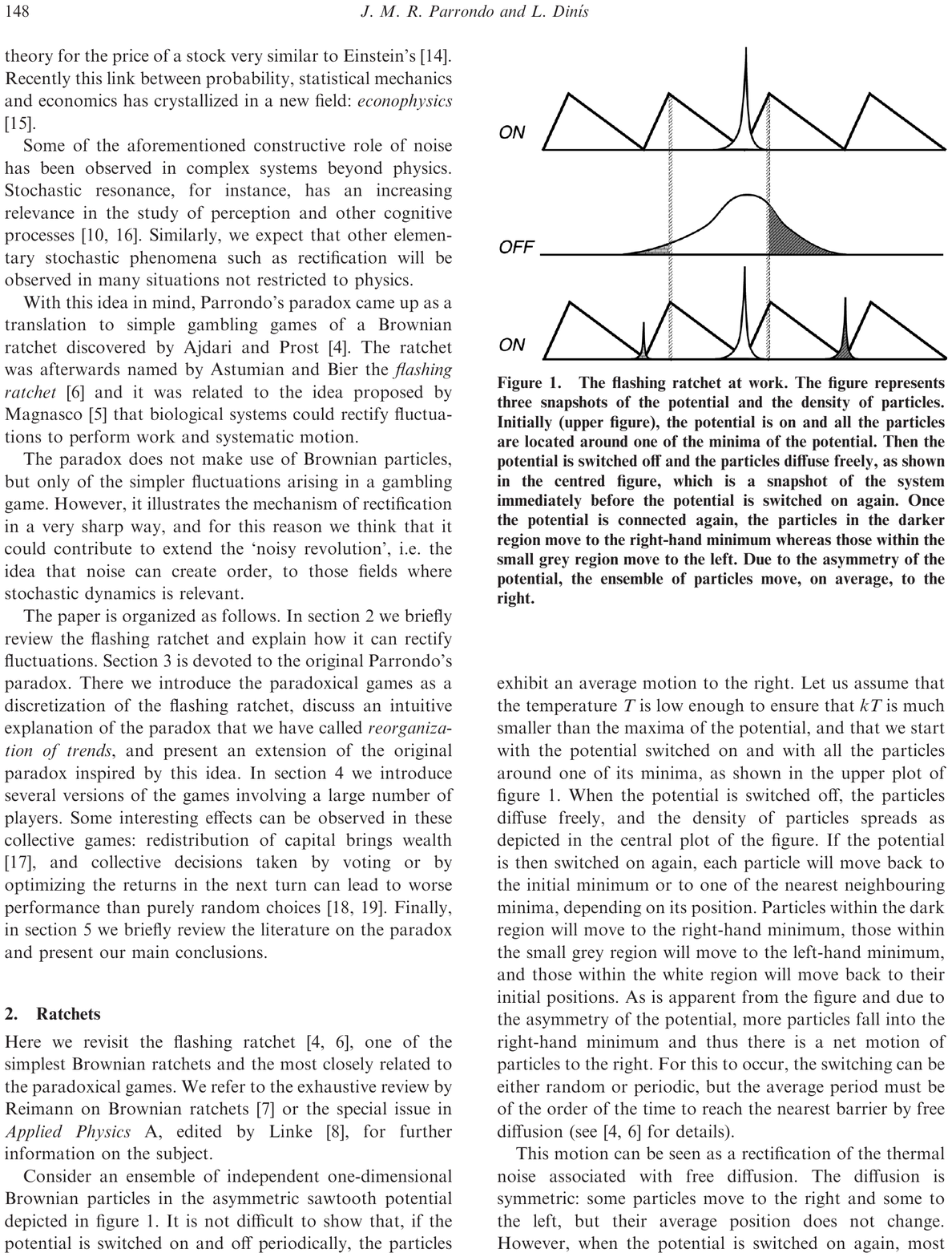}
\caption{\label{Parrondo-Dinis}The flashing ratchet at work.  The figure represents three snapshots of the potential and the density of particles. Initially (upper figure), the potential is on and all the particles are located around one of the minima of the potential. Then the potential is switched off and the particles diffuse freely, as shown in the centred figure, which is a snapshot of the system immediately before the potential is switched on again. Once the potential is connected again, the particles in the darker region move to the right-hand minimum whereas those within the small gray region move to the left. Due to the asymmetry of the potential, the ensemble of particles move, on average, to the right.  [Reprinted from Parrondo and Din\'is \cite{PD04} with the permission of Taylor \& Francis Ltd.]}
\end{figure}

Our aim here is to show, via a precise mathematical formulation of the flashing Brownian ratchet, how one can study the process numerically using a random walk approximation.  In Section~\ref{Parrondo-games} we provide a new formulation of Parrondo's paradox motivated by the flashing Brownian ratchet.  These Parrondo games are then modified in Section~\ref{approx} so as to yield our random walk approximation.  We determine, in Sections~\ref{modeling1} and \ref{modeling2}, whether the conceptual Figures~\ref{HATP} and \ref{Parrondo-Dinis} accurately represent the behavior of the flashing Brownian ratchet.  

Alternatively, one could numerically solve a partial differential equation, specifically the Fokker--Planck equation, to obtain similar results, but we believe that our method is simpler.  Discretization of the Fokker--Planck equation for the Brownian ratchet, and the relationship to Parrondo's games, has been explored by Allison and Abbott \cite{AA02} and Toral et al. \cite{TAM03a,TAM03b}.

Using the notation of Figure~\ref{HATP}, it is clear how to formulate the model.  First, the asymmetric sawtooth potential $V$ is given by the formula
\begin{equation}\label{sawtooth}
V(x):=\begin{cases} x/\alpha&\text{if $0\le x\le\alpha L$,}\\(L-x)/(1-\alpha)&\text{if $\alpha L\le x\le L$,}\end{cases}
\end{equation}
extended periodically (with period $L$) to all of ${\bf R}$.  Here $0<\alpha<1$ and $L>0$, and asymmetry requires only that $\alpha\ne1/2$.  
($L$ is a scale factor that is not important; some authors take $L=1$.) The \textit{Brownian ratchet} is a one-dimensional diffusion process with diffusion coefficient 1 and drift coefficient $\mu$ proportional to $-V'$, that is, for some $\gamma>0$,
$$
\mu(x):=-\gamma V'(x)=\begin{cases}-\gamma/\alpha&\text{if $0\le x<\alpha L$,}\\ \gamma/(1-\alpha)&\text{if $\alpha L\le x<L$,}\end{cases}
$$
again extended periodically (with period $L$) to all of ${\bf R}$.  Such a process $X_t$ is governed by the stochastic differential equation (SDE)
\begin{equation}\label{SDE1}
dX_t=dB_t+\mu(X_t)\,dt,
\end{equation}
where $B_t$ is a standard Brownian motion.  This diffusion process drifts to the left on $[nL,(n+\alpha) L)$ and drifts to the right on $[(n+\alpha) L,(n+1)L)$, for each $n\in{\bf Z}$.  In other words, it drifts toward a minimum of the sawtooth potential $V$.

Given $\tau_1,\tau_2>0$, the \textit{flashing Brownian ratchet} is a time-inhomogeneous one-dimensional diffusion process that evolves as a Brownian motion on $[0,\tau_1]$ (potential ``off''), then as a Brownian ratchet on $[\tau_1,\tau_1+\tau_2]$ (potential ``on''), then as a Brownian motion on $[\tau_1+\tau_2,2\tau_1+\tau_2]$ (potential ``off''), then as a Brownian ratchet on $[2\tau_1+\tau_2,2\tau_1+2\tau_2]$ (potential ``on''), and so on.  Such a process $Y_t$ is governed by the SDE
$$dY_t=dB_t+\eta(t)\mu(Y_t)\,dt,$$
where\footnote{By definition, $\text{mod}(t,\tau)$ is the remainder (in $[0,\tau)$) when $t$ is divided by $\tau$.}
$$
\eta(t):=\begin{cases} 0&\text{if mod$(t,\tau_1+\tau_2)<\tau_1$,}\\
                       1&\text{if mod$(t,\tau_1+\tau_2)\ge\tau_1$.}\end{cases}
$$
Notice that, once the parameters of the sawtooth potential ($\alpha$ and $L$) are specified, the flashing Brownian ratchet is specified by three parameters, $\gamma$, $\tau_1$, and $\tau_2$.  (Alternatively, we could let the diffusion coefficients of the Brownian motion and the Brownian ratchet be $\sigma_1^2$ and $\sigma_2^2$, respectively, instead of 1 and 1, and then take $\tau_1=\tau_2=1$.)  Our formulation is equivalent to that of Din\'is \cite[Eq.\ (1.78)]{D05}, though parameterized differently.

Occasionally we may want to wrap these processes (the Brownian ratchet and the flashing Brownian ratchet) around the circle of circumference $L$.  Because they are spatially periodic with period $L$, the wrapped processes remain Markovian.  For example, we could define the wrapped Brownian ratchet $\bar X_t$ by
$$
\bar X_t:=e^{(2\pi i/L)X_t}.
$$
Alternatively, we could simply define it as the $[0,L)$-valued process
$$
\bar X_t:=\text{mod}(X_t,L),
$$
with the understanding that the endpoints of the interval $[0,L)$ are identified, effectively making it a circle of circumference $L$.  The same procedure applies to the flashing Brownian ratchet, yielding
$$
\bar Y_t:=\text{mod}(Y_t,L).
$$

\section{Parrondo games from Brownian ratchets}\label{Parrondo-games}

We first consider the periodic drift coefficient $\mu$ described above in the case in which $\alpha=1/3$ and $L=3$.  We want to discretize space and time.  We replace each interval $[j,j+1)$ by its midpoint $j+1/2$, which we relabel as $j$.  In terms of $\mu$ we define the discrete drift by $\mu_j:=\mu(j+1/2)$.  This discretizes space, now interpreted as profit in a game of chance instead of displacement.  When the potential is off, we replace the Brownian motion by a simple symmetric random walk on {\bf Z} and call this game $A$, a fair coin-tossing game.  When the potential is on, we replace the Brownian ratchet by an asymmetric random walk on {\bf Z} whose periodic state-dependent transition probabilities are determined by the discrete drift and call this game $B$.  

\begin{figure}[htb]
\centering
\includegraphics[width=3.7in]{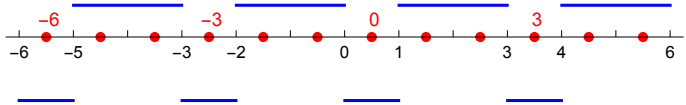}
\caption{\label{drift-fig}The periodic drift $\mu$ with $\alpha=1/3$ and $L=3$ is plotted on the interval $[-6,6]$.  Each interval $[j,j+1)$ (in black) is replaced by its midpoint $j+1/2$, which we relabel as $j$ (in red) to discretize space. To discretize time as well, we replace the  Brownian motion by a simple symmetric random walk on {\bf Z}, and we replace the Brownian ratchet by an asymmetric random walk on {\bf Z} whose periodic state-dependent transition probabilities are determined by a discretized version of $\mu$.}
\end{figure}

We find that the asymmetric random walk on {\bf Z} has periodic state-dependent transition probabilities of the form  
\begin{equation}\label{transitions1}
P(j,j+1):=\begin{cases}p_0&\text{if mod$(j,3)=0$,}\\ p_1&\text{if mod$(j,3)=\text{1 or 2}$,}\end{cases}
\end{equation}
and $P(j,j-1)=1-P(j,j+1)$, where $0<p_0<1/2$ since $\mu_j<0$ if $\text{mod}(j,3)=0$ and $1/2<p_1<1$ since $\mu_j>0$ if $\text{mod}(j,3)=1\text{ or }2$.  An invariant measure $\pi$, if it exists, must be periodic (i.e., $\pi(j)=\pi(j+3)$).  We can check that the detailed balance conditions 
\begin{equation}\label{detailed1}
\pi(0)p_0=\pi(1)(1-p_1),\quad \pi(1)p_1=\pi(2)(1-p_1),\quad \pi(2)p_1=\pi(0)(1-p_0)
\end{equation}
have a solution if and only if $(1-p_0)(1-p_1)^2=p_0p_1^2$.  Solving for $p_1$ in terms of $p_0$, we find that
$$
p_1=\frac{1}{1+\sqrt{p_0/(1-p_0)}}.  
$$
Denoting the square root in the denominator by $\rho$, the requirements that $0<p_0<1/2<p_1<1$ become $0<\rho<1$, and 
\begin{equation}\label{p0,p1-1}
p_0=\frac{\rho^2}{1+\rho^2},\qquad p_1=\frac{1}{1+\rho}.
\end{equation}
(This is the parameterization of Ethier and Lee \cite{EL09}.)  Further, in terms of $\rho$, the invariant measure obtained from \eqref{detailed1} has the form
$$
(\pi(0),\pi(1),\pi(2))=\frac{(1+\rho^2,\rho(1+\rho),1+\rho)}{2(1+\rho+\rho^2)},
$$
resulting in a mean profit of $\pi(0)(2p_0-1)+(\pi(1)+\pi(2))(2p_1-1)=0$, so game $B$ is also fair (asymptotically).  Nevertheless, the random mixture $c A+(1-c)B$ ($0<c<1$) is winning, as is the periodic pattern $A^r B^s$ for each $r,s\ge1$ except $r=s=1$.  This is the original form of \textit{Parrondo's paradox}.  The special case in which $\rho=1/3$, namely
$$
p_0=\frac{1}{10},\qquad p_1=\frac34,
$$
was the choice of Parrondo, at least in the absence of a bias parameter.

There are several proofs available for these results, including Pyke \cite{P03}, based on mod $m$ random walks, Key et al. \cite{KKA06}, based on random walks in periodic environments, Ethier and Lee \cite{EL09}, based on the strong-mixing central limit theorem, and R\'emillard and Vaillancourt \cite{RV15}, based on Oseledec's multiplicative ergodic theorem.

It should be mentioned that Pyke \cite{P03} found an elegant way to derive Parrondo's games \eqref{transitions1} from a one-dimensional diffusion process that can be interpreted as a Brownian ratchet but with the sawtooth potential having a shape different from~\eqref{sawtooth}.

The above formulation with $\alpha=1/3$ and $L=3$ can be generalized.  Let $0<\alpha<1$ and assume that $\alpha$ is rational, so that there exist relatively prime positive integers $l$ and $L$ with $\alpha=l/L$.  Game $A$ is as before, whereas game $B$ is described by an  asymmetric random walk on {\bf Z} with periodic state-dependent transition probabilities of the form  
\begin{equation}\label{transitions2}
P(j,j+1):=\begin{cases}p_0&\text{if mod$(j,L)<l$,}\\ p_1&\text{if mod$(j,L)\ge l$,}\end{cases}
\end{equation}
and $P(j,j-1)=1-P(j,j+1)$, where $0<p_0<1/2<p_1<1$ as before.  An invariant measure $\pi$, if it exists, must be periodic (i.e., $\pi(j)=\pi(j+L)$).  We can check that the detailed balance conditions 
\begin{align}\label{detailed2}
\pi(j)p_0&=\pi(j+1)(1-p_0),\quad 0\le j\le l-2,\nonumber\\
\pi(j)p_0&=\pi(j+1)(1-p_1),\quad j=l-1,\nonumber\\
\pi(j)p_1&=\pi(j+1)(1-p_1),\quad l\le j\le L-2,\nonumber\\
\pi(j)p_1&=\pi(0)(1-p_0),\quad j=L-1,
\end{align}
have a solution if and only if $(1-p_0)^l(1-p_1)^{L-l}=p_0^lp_1^{L-l}$.  Solving for $p_1$ in terms of $p_0$, we find that
\begin{align}
\label{p1}
p_1=\frac{1}{1+[p_0/(1-p_0)]^{l/(L-l)}}=\frac{1}{1+[p_0/(1-p_0)]^{\alpha/(1-\alpha)}}.  
\end{align}
Denoting the $\alpha/(1-\alpha)$th power in the denominator by $\rho$, the requirements that $0<p_0<1/2<p_1<1$ become $0<\rho<1$, and
\begin{equation}\label{p0,p1-2}
p_0=\frac{\rho^{(1-\alpha)/\alpha}}{1+\rho^{(1-\alpha)/\alpha}},\qquad p_1=\frac{1}{1+\rho}.
\end{equation}
Notice that \eqref{transitions2} and \eqref{p0,p1-2} generalize \eqref{transitions1} and \eqref{p0,p1-1}.

Further, in terms of $\rho$, the invariant measure obtained from \eqref{detailed2} is proportional to
\begin{align*}
\pi(j)&=\rho^{j(1-\alpha)/\alpha},\quad 0\le j\le l-1,\\
\pi(j)&=\rho^{L(1-\alpha)-(j-l+1)}\frac{1+\rho}{1+\rho^{(1-\alpha)/\alpha}},\quad l\le j\le L-1,
\end{align*}
resulting in a mean profit of 
\begin{align*}
&(\pi(0)+\cdots+\pi(l-1))(2p_0-1)+(\pi(l)+\cdots+\pi(L-1))(2p_1-1)\\
&\qquad{}=\frac{1-\rho^{L(1-\alpha)}}{1-\rho^{(1-\alpha)/\alpha}}\,\frac{\rho^{(1-\alpha)/\alpha}-1}{1+\rho^{(1-\alpha)/\alpha}}
+\rho^{L(1-\alpha)}\frac{\rho^{-1}-\rho^{-(L-l+1)}}{1-\rho^{-1}}\,\frac{1+\rho}{1+\rho^{(1-\alpha)/\alpha}}\,\frac{1-\rho}{1+\rho}\\
&\qquad{}=-\frac{1-\rho^{L(1-\alpha)}}{1+\rho^{(1-\alpha)/\alpha}}+\frac{1-\rho^{L(1-\alpha)}}{1+\rho^{(1-\alpha)/\alpha}}\\
&\qquad{}=0,
\end{align*}
so game $B$ is also fair (asymptotically). 

As a function of $p_0$ the function in (\ref{p1}) is strictly convex on $(0, 1/2)$ if $\alpha < 1/2$.  It follows that the random mixture $c A+(1-c)B$ $(0<c<1)$ has positive mean profit so that the Parrondo effect is present. If $\alpha > 1/2$, then the function in (\ref{p1}) is striclty concave on $(0, 1/2)$ and the anti-Parrondo effect, in which two fair games combine to lose, appears.

Thus, game $A$ and (the generalized) game $B$ lead to a new form of Parrondo's paradox.

\section{Approximating the Brownian ratchet}\label{approx}

As in Section~\ref{Parrondo-games}, let $0<\alpha<1$ and assume that $\alpha$ is rational, so that there exist relatively prime positive integers $l$ and $L$ with $\alpha=l/L$.  Consider a sequence of asymmetric random walks on ${\bf Z}$ with periodic state-dependent transition probabilities as follows.  Given $n\ge1$, we let
\begin{equation}\label{transitions3}
P_n(j,j+1):=\begin{cases}p_0&\text{if mod$(j,Ln)<ln$,}\\ p_1&\text{if mod$(j,Ln)\ge ln$,}\end{cases}
\end{equation}
and $P_n(j,j-1)=1-P_n(j,j+1)$, where
\begin{equation}\label{p0,p1}
p_0=\frac{\rho^{(1-\alpha)/\alpha}}{1+\rho^{(1-\alpha)/\alpha}},\qquad p_1=\frac{1}{1+\rho}
\end{equation}
as in \eqref{p0,p1-2}.  Notice that the special case of \eqref{transitions3} in which $n=1$ is precisely \eqref{transitions2}.

We want to let $n\to\infty$ but first we let 
\begin{equation}\label{rho}
\rho=1-\frac{\lambda}{n},
\end{equation}
where $\lambda>0$, then we rescale time by allowing $n^2$ jumps per unit of time, and finally we rescale space to $\{i/n:i\in{\bf Z}\}$ by dividing by $n$.  The result in the limit as $n\to\infty$ is a Brownian ratchet.  

\begin{theorem}\label{theorem1}
For $n=1,2,\ldots$, let $\{X_n(k),\,k=0,1,\ldots\}$ denote the random walk on ${\bf Z}$ defined by \eqref{transitions3}--\eqref{rho}, and let $X_t$ denote the Brownian ratchet with $\gamma=\lambda(1-\alpha)/2$.  If $X_n(0)/n$ converges in distribution to $X_0$ as $n\to\infty$, then $\{X_n(\lfloor n^2 t\rfloor)/n,\,t\ge0\}$ converges in distribution in $D_{\bf R}[0,\infty)$ to $\{X_t,\, t\ge0\}$ as $n\to\infty$.
\end{theorem}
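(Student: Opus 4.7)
The plan is to use the standard diffusion-approximation scheme for discrete-time Markov chains: verify that the infinitesimal parameters of the rescaled chain converge to those of the Brownian ratchet, establish tightness in $D_{\bf R}[0,\infty)$, identify every subsequential weak limit via the martingale problem for $Af := \tfrac12 f'' + \mu f'$, and invoke weak uniqueness. Concretely, the jumps of $Y_n(t) := X_n(\lfloor n^2 t\rfloor)/n$ are of size $1/n$ taken at rate $n^2$, so for $x = j/n$ the approximating drift and diffusion rates are $b_n(x) := n(2p_{i(x)}-1)$ and $a_n(x) := 1$, where $i(x) \in \{0,1\}$ is selected by the periodic rule \eqref{transitions3}. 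Substituting $\rho = 1 - \lambda/n$ from \eqref{rho} into \eqref{p0,p1} and expanding $\rho^{(1-\alpha)/\alpha} = 1 - ((1-\alpha)/\alpha)(\lambda/n) + O(1/n^2)$ yields
\begin{equation*}
n(2p_0 - 1) = -\frac{(1-\alpha)\lambda}{2\alpha} + O(1/n), \qquad n(2p_1 - 1) = \frac{\lambda}{2} + O(1/n).
\end{equation*}
Because $\text{mod}(j, Ln) < ln$ translates under $x = j/n$ to $\text{mod}(x, L) < \alpha L$, and since $\gamma = \lambda(1-\alpha)/2$ makes the two right-hand sides equal $-\gamma/\alpha$ and $\gamma/(1-\alpha)$, one has $b_n(x) \to \mu(x)$ boundedly in $n$, uniformly on every compact set disjoint from the discrete discontinuity set of $\mu$.

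Next I would establish tightness of $\{Y_n\}$ in $D_{\bf R}[0,\infty)$; since the jumps of $Y_n$ are uniformly of size $1/n$ and $b_n$ is uniformly bounded, this follows from a standard application of Aldous's criterion. For any $f \in C_c^2({\bf R})$, a three-term Taylor expansion gives the approximating generator $A_n f(x) = b_n(x) f'(x) + \tfrac12 f''(x) + O(1/n)$ uniformly in $x$. The program is then, for each weakly convergent subsequence $Y_n \Rightarrow Y$, to pass to the limit in the compensated martingale $M_n^f(t) := f(Y_n(t)) - f(Y_n(0)) - \int_0^t A_n f(Y_n(s))\,ds$ and conclude that $f(Y_t) - f(Y_0) - \int_0^t Af(Y_s)\,ds$ is a martingale for every $f \in C_c^2({\bf R})$.

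The main obstacle lies in this last passage, because $\mu$ is discontinuous on the countable set $\{kL, (k+\alpha)L : k \in {\bf Z}\}$. The remedy is that $\mu$ is bounded and measurable, so by Girsanov's theorem the martingale problem for $A$ is well-posed, with the unique solution equivalent in law to Brownian motion on every finite time horizon; in particular any solution $Y$ assigns zero Lebesgue occupation time to the discontinuity set of $\mu$. Combined with the bounded pointwise a.e.\ convergence $b_n \to \mu$ and a Skorokhod representation, dominated convergence justifies $\int_0^t b_n(Y_n(s)) f'(Y_n(s))\,ds \to \int_0^t \mu(Y_s) f'(Y_s)\,ds$, which identifies the limit. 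Weak uniqueness then upgrades subsequential convergence to convergence of the full sequence, as required.
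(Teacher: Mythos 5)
Your approach is essentially the same as the paper's: verify that the infinitesimal parameters of the rescaled walk converge to those of $\mathscr{L}f = \tfrac12 f'' + \mu f'$, establish tightness, and identify the limit through the martingale problem, with well-posedness via Girsanov. The main difference is that you carry out the tightness (Aldous) and identification (Skorokhod embedding plus an occupation-time argument) by hand, whereas the paper invokes a single packaged result, Ethier and Kurtz \cite[Corollary~4.8.17]{EK86}, after showing that the discrete generator $\mathscr{L}_n$ converges to $\mathscr{L}$. Your version is self-contained at the cost of more machinery, and a reader who does not have that particular corollary at hand may find it more transparent.

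There is one place where you work harder than necessary and in the process slightly weaken the result. You assert that $b_n(x)\to\mu(x)$ only ``uniformly on every compact set disjoint from the discrete discontinuity set of $\mu$,'' and then introduce the zero-occupation-time argument to handle the exceptional set. But the discretization in \eqref{transitions3} was designed precisely so that the rule selecting $p_0$ versus $p_1$ at $j$ translates, under $x=j/n$, to the \emph{same} rule that selects $-\gamma/\alpha$ versus $\gamma/(1-\alpha)$ in the definition of $\mu$: $\text{mod}(j,Ln)<ln$ if and only if $\text{mod}(x,L)<\alpha L$. Consequently $b_n(x)$ is always the discrete drift on the correct side of the discontinuity, and
$$\sup_{x:\,nx\in{\bf Z}}\,|\mathscr{L}_n f(x)-\mathscr{L}f(x)|\to 0$$
holds uniformly over \emph{all} lattice points, exactly as the paper claims. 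Because the approximating walk lives only on the lattice, the discontinuities of $\mu$ never interfere, and the Skorokhod representation plus occupation-time detour can be replaced by the cleaner uniform generator convergence. Beyond this, your proof is correct; note also that you should, as the paper does, at least mention the compact containment condition (it is routine here, with bounded drift and jumps of size $1/n$, but formally separate from the Aldous criterion).
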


\begin{proof}
The generator of the diffusion process satisfying the SDE \eqref{SDE1} is
$$
\mathscr{L}f(x):=\frac12 f''(x)+\mu(x) f'(x)
$$
acting on $C_c^\infty({\bf R})$, the space of real-valued $C^\infty$ functions on ${\bf R}$ with compact support, where 
$$
\mu(x):=\begin{cases}-\gamma/\alpha&\text{if $0\le x<\alpha L$,}\\ \gamma/(1-\alpha)&\text{if $\alpha L\le x<L$,}\end{cases}
$$
and $\mu$ is extended periodically (with period $L$) to all of ${\bf R}$.  Then, by virtue of the Girsanov transformation, the martingale problem for $\mathscr{L}$ is well posed (e.g., Stroock and Varadhan \cite[Theorem~6.4.3]{SV79}) and it suffices to show that the discrete generator $\mathscr{L}_n$, given by
$$
\mathscr{L}_n f(x):=n^2\{f(x+1/n)P_n(x,x+1)+f(x-1/n)P_n(x,x-1)-f(x)\},
$$
converges to $\mathscr{L}$ in the sense that
$$\lim_{n\to\infty}\sup_{x:nx\in {\bf Z}}|\mathscr{L}_n f(x)-\mathscr{L}f(x)|=0,\qquad f\in C^\infty_c({\bf R}).$$
Here we are using a result of Ethier and Kurtz \cite[Corollary~4.8.17]{EK86}.

If $\{\mu_n\}$ is a sequence of real numbers converging to $\mu$, then
\begin{align*}
&n^2\bigg[f\bigg(\! x+\frac1n\bigg)\frac12\bigg(1+\frac{\mu_n}{n}\bigg)+f\bigg(\! x-\frac1n\bigg)\frac12\bigg(1-\frac{\mu_n}{n}\bigg)-f(x)\bigg]\\
&\quad{}= n^2\bigg[\bigg(f(x)+\frac1n f'(x)+\frac{1}{2n^2}f''(x)+o(n^{-2})\bigg)\frac12\bigg(1+\frac{\mu_n}{n}\bigg)\\
&\qquad\qquad{}+\bigg(f(x)-\frac1n f'(x)+\frac{1}{2n^2}f''(x)+o(n^{-2})\bigg)\frac12\bigg(1-\frac{\mu_n}{n}\bigg)-f(x)\bigg]\\
&\quad{}=\frac12 f''(x)+\mu f'(x)+o(1),
\end{align*}
uniformly over all $x$ with $nx\in{\bf Z}$.  With 
\begin{align*}
\frac12\bigg(1+\frac{\mu_n}{n}\bigg)&=p_0=\frac{\rho^{(1-\alpha)/\alpha}}{1+\rho^{(1-\alpha)/\alpha}}=\frac{(1-\lambda/n)^{(1-\alpha)/\alpha}}{1+(1-\lambda/n)^{(1-\alpha)/\alpha}}\\
&=\frac{1-\lambda(1-\alpha)/(n\alpha)+o(n^{-1})}{2-\lambda(1-\alpha)/(n\alpha)+o(n^{-1})}\\
&=\frac12\bigg(1-\frac{\lambda(1-\alpha)/(n\alpha)+o(n^{-1})}{2-\lambda(1-\alpha)/(n\alpha)+o(n^{-1})}\bigg)\\
&=\frac12\bigg(1-\frac{\lambda(1-\alpha)}{2n\alpha}+o(n^{-1})\bigg), 
\end{align*}
we find that $\mu=-\lambda(1-\alpha)/(2\alpha)$.  And with 
$$
\frac12\bigg(1+\frac{\mu_n}{n}\bigg)=p_1=\frac{1}{1+\rho}=\frac{1}{2-\lambda/n}=\frac12\bigg(\frac{1}{1-\lambda/(2n)}\bigg)=\frac12\bigg(1+\frac{\lambda}{2n}+o(n^{-1})\bigg), 
$$
we find that $\mu=\lambda/2$.  This suffices to complete the proof.  (We leave it to the reader to check that the compact containment condition is satisfied.)
\end{proof}

We assume now that the time parameters $\tau_1>0$ and $\tau_2>0$ of the flashing Brownian ratchet are rational.  Let $m$ be the smallest positive integer such that $m^2\tau_1$ and $m^2\tau_2$ are integers.

\begin{theorem}\label{theorem2}
For $n=m,2m,3m\ldots$, let $\{Y_n(k),\,k=0,1,\ldots\}$ denote the time-inhomogeneous random walk on ${\bf Z}$ that evolves as the simple symmetric random walk for $n^2\tau_1$ steps, then as the random walk of Theorem~\ref{theorem1} for $n^2\tau_2$ steps, then as the simple symmetric random walk for $n^2\tau_1$ steps, then as the random walk of Theorem~\ref{theorem1} for $n^2\tau_2$ steps, and so on.  Let $Y_t$ denote the flashing Brownian ratchet with parameters $\gamma=\lambda(1-\alpha)/2$, $\tau_1$, and $\tau_2$.  If $Y_n(0)/n$ converges in distribution to $Y_0$ as $n\to\infty$, then $\{Y_n(\lfloor n^2 t\rfloor)/n,\,t\ge0\}$ converges in distribution in $D_{\bf R}[0,\infty)$ to $\{Y_t,\, t\ge0\}$ as $n\to\infty$.
\end{theorem}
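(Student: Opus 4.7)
The plan is to reduce to Theorem~\ref{theorem1} applied separately on each phase and then to patch the convergences together via the Markov property at the switching times. First, observe that the simple symmetric random walk (the ``off'' phase) is the degenerate case of the random walk in Theorem~\ref{theorem1} with $\lambda=0$: the generator computation there, with $\mu\equiv 0$, yields convergence of its rescaling to standard Brownian motion (Donsker's theorem). Thus on each constant-regime phase, Theorem~\ref{theorem1} identifies the limiting process: Brownian motion when the potential is off, and the Brownian ratchet with $\gamma=\lambda(1-\alpha)/2$ when the potential is on. The choice of $m$ ensures that for every $n\in m{\bf Z}_{>0}$, each switching time is an integer multiple of $n^{-2}$, so the random walk changes regime exactly at a clock tick and no rounding error is introduced at a switch.

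I would then proceed by induction on the phases $[0,\tau_1]$, $[\tau_1,\tau_1+\tau_2]$, $[\tau_1+\tau_2,2\tau_1+\tau_2]$, $\ldots$. The base case on $[0,\tau_1]$ is a direct application of Theorem~\ref{theorem1} with $\gamma=0$, using the hypothesis $Y_n(0)/n\Rightarrow Y_0$. For the inductive step, suppose that $\{Y_n(\lfloor n^2 t\rfloor)/n,\,0\le t\le S\}$ converges in distribution in $D_{\bf R}[0,S]$ to $\{Y_t,\,0\le t\le S\}$ for a switching time $S$; in particular $Y_n(n^2 S)/n\Rightarrow Y_S$. Invoke the Skorokhod representation theorem to pass to a probability space on which this initial-point convergence is almost sure. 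By the (time-homogeneous) Markov property within the next phase, the shifted process $\{Y_n(n^2 S+k),\,k\ge 0\}$ is, conditional on $Y_n(n^2 S)$, a random walk of the type covered by Theorem~\ref{theorem1} (with $\gamma=0$ or $\gamma=\lambda(1-\alpha)/2$ according to whether the phase is off or on). Applying Theorem~\ref{theorem1} conditionally extends the convergence to the next switching time. Because the limit process $Y$ is almost surely continuous at the deterministic time $S$, the two Skorokhod-convergent pieces concatenate to give Skorokhod convergence on the combined interval. Iteration gives convergence in $D_{\bf R}[0,T]$ for every $T>0$, hence in $D_{\bf R}[0,\infty)$.

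The main technical obstacle is the patching step across a switching time. This requires (a) the Markov property, used to restart the process with a fresh initial condition at time $S$ (for the random walk, at the exact integer step $n^2 S$, and for the limit, at the deterministic time $S$), and (b) continuity of the path-concatenation map on Skorokhod space at paths continuous at the junction point, so that convergences on $[0,S]$ and on $[S,S']$ combine to give convergence on $[0,S']$. Both items are standard: (a) is immediate since each phase is time-homogeneous and since one-dimensional diffusions are strong Markov, while (b) is a routine property of the $J_1$ topology whenever the limit is continuous at the junction, as holds here since $Y$ has continuous sample paths. The compact containment condition required to invoke Corollary~4.8.17 of Ethier and Kurtz is inherited phase by phase from the proof of Theorem~\ref{theorem1}, so no new tightness argument is needed.
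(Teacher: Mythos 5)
Your proposal is correct and takes essentially the same approach as the paper, which dispatches this result in two sentences: combine Theorem~\ref{theorem1} with Donsker's theorem phase by phase, with the role of $m$ being exactly to keep the switching times on the random-walk clock. Your fleshing out of the ``combining'' step---Markov restart at deterministic switching times, Skorokhod representation for the initial condition of each phase, and continuity of concatenation in the $J_1$ topology at a point where the limit path is continuous---is the standard way to make that one-line argument rigorous, and your observation that the off phase is the $\lambda=0$ degenerate case of Theorem~\ref{theorem1}'s generator computation is a correct (and tidy) substitute for citing Donsker's theorem directly.
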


\begin{proof}
The proof is simply a matter of combining Theorem~\ref{theorem1} with Donsker's theorem applied to the simple symmetric random walk.  The assumption about $m$ ensures that the times $n^2\tau_1$ and $n^2\tau_2$ are integers.
\end{proof}

\section{Modeling Figure~\ref{HATP}, starting at 0}\label{modeling1}

To model Figure~\ref{HATP} accurately, some measurements are needed.  We begin with a cropped \texttt{.pdf} version of the figure and enlarge it on the computer screen to 800\% of normal.  It appears that the figure is rasterized, so our precision is limited.  We measure that $L=206\,$mm and $\alpha L=52\,$mm.  Thus, we imagine that $\alpha=1/4$ was intended, and either the drawing or the measurements of it are slightly in error.  We also measure the height of the normal curve at three places, namely 0, 1, and $-3$, assuming $\alpha=1/4$ and $L=4$.  We
 measure the respective heights to be 99.5\,mm, 81\,mm, and 15\,mm.  Theoretically, the three heights are $(2\pi t)^{-1/2}$, $(2\pi t)^{-1/2}e^{-1/(2t)}$, and $(2\pi t)^{-1/2}e^{-9/(2t)}$.  Therefore, we need to find $t$ such that
$$
99.5e^{-1/(2t)}=81, \qquad 99.5e^{-9/(2t)}=15.
$$
The equations have solutions $t=2.43062$ and $t=2.37830$, respectively.  Because of the crudeness of our measurements, we round off to $t=2.4$.

We conclude that the flashing Brownian ratchet described in Figure~\ref{HATP} evolves as a Brownian motion (starting at 0) for time $\tau_1=2.4$.  Then the Brownian ratchet with $\alpha=1/4$, $L=4$, and $\gamma$ to be specified runs (starting from where the Brownian motion ended) for time $\tau_2$ to be specified.  There is no good way to estimate $\gamma$ and $\tau_2$ from Figure~\ref{HATP}.  We take $\tau_2=\tau_1=2.4$ for convenience and let $\gamma=\lambda(1-\alpha)/2=3\lambda/8$ for several choices of $\lambda$ ($\lambda=1,2,3,4,5$).  Then the Brownian motion runs (starting from where the Brownian ratchet ended) for time $\tau_1=2.4$, then the Brownian ratchet runs for time $\tau_2=2.4$, and so on.  We are interested in the distribution of the process at time $\tau_1+\tau_2=4.8$, which we can compare with the third panel in Figure~\ref{HATP}.

There is no known analytical formula for the density of the flashing Brownian ratchet at time $\tau_1+\tau_2$, but we can approximate it numerically as suggested in Theorem~\ref{theorem2}.  The positive integer $m$ of Theorem~\ref{theorem2} is 5.  In each case we take $n=100$, meaning that at time $\tau_1+\tau_2=4.8$, the approximating random walk has made $4.8n^2=48{,}000$ steps.  We compute its distribution recursively after 1 step, 2 steps, \dots, 48{,}000 steps, using the simple symmetric random walk for the first $24{,}000$ steps, then the asymmetric random walk for the next $24{,}000$ steps.  Notice that the distribution of the random walk after $2k$ steps is concentrated on $\{-2k,-2(k-1),\ldots,0,\ldots,2(k-1),2k\}$, whereas the distribution after $2k+1$ steps is concentrated on $\{-2k-1,-2k+1,\ldots,-1,1,\ldots,2k-1,2k+1\}$.
We save the distribution after $48{,}000$ steps, plot the histogram, and interpolate linearly.  The results are displayed in Figure~\ref{fBr0-1-5}.

\begin{figure}
\centering
\includegraphics[width=3in]{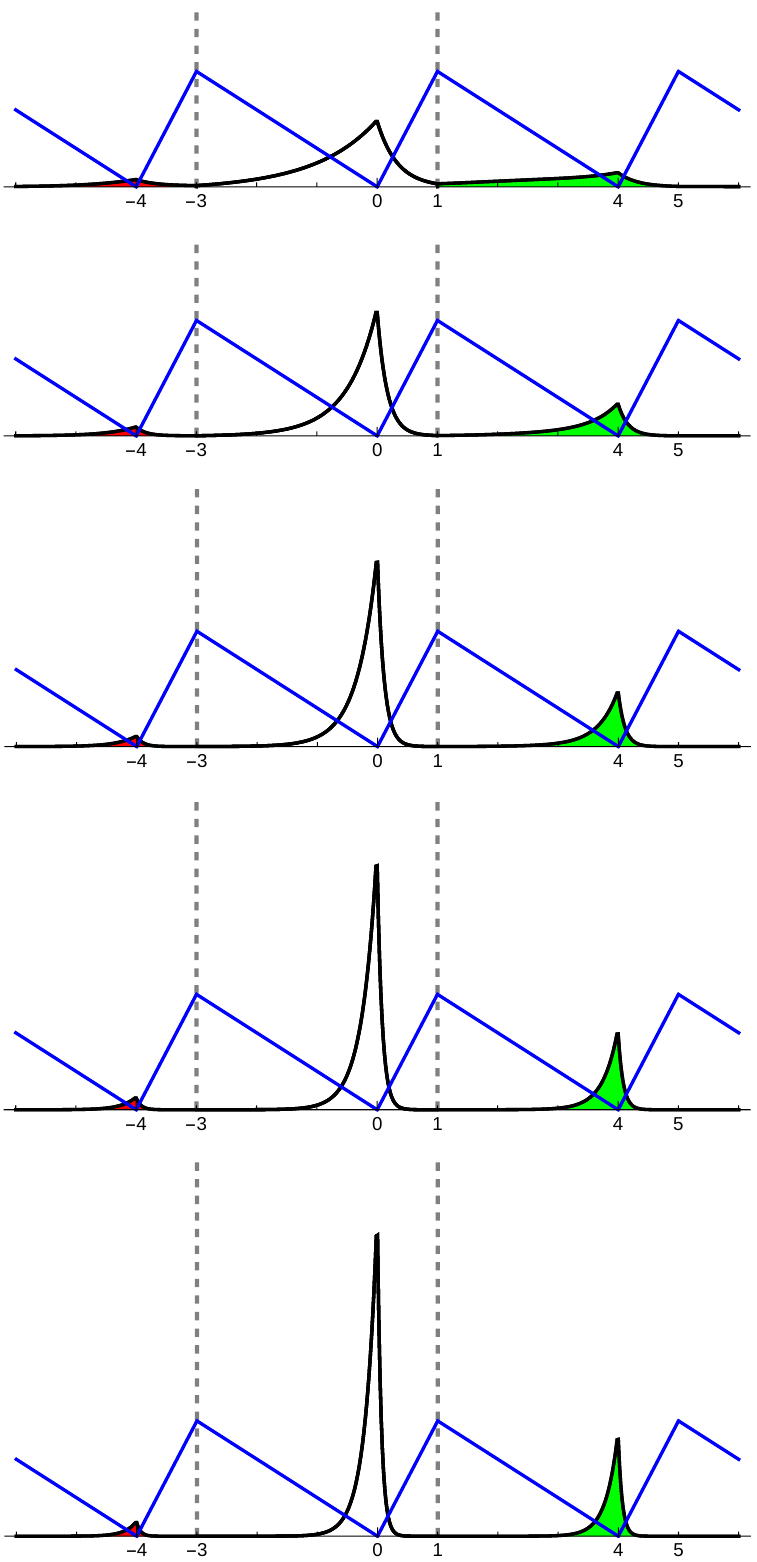}
\caption{\label{fBr0-1-5}We mathematically model the third panel of Figure~\ref{HATP}.  A Brownian motion, starting at 0, runs for time $\tau_1=2.4$.  Then, starting from where the Brownian motion ended, a Brownian ratchet with $\alpha=1/4$, $L=4$, $\gamma=3\lambda/8$, and $\lambda=1,2,3,4,5$ (from top to bottom) runs for time $\tau_2=2.4$.  The black curve is an approximation to the density of the process at time $\tau_1+\tau_2$, via the random walk approximation with $n=100$. The blue curve is the sawtooth potential.}
\end{figure}

There are several notable differences between the figures of Figure~\ref{fBr0-1-5} and the third panel of Figure~\ref{HATP}.  First, the three peaks of the density are pointed, unlike a normal density, so Figure~\ref{Parrondo-Dinis} is more accurate in this regard.  Second, they are asymmetric, with more mass to the left than to the right of $-4$, 0, and 4.  Presumably, the explanation for this is that, for example, the drift to the left on $[0,1)$ is stronger than the drift to the right on $[-3,0)$.  Another distinction is that the ratio of the height of the highest peak to that of the second highest is at least 3 in Figure~\ref{fBr0-1-5} (see Table~\ref{computations-lambda}) but less than 1.5 in Figure~\ref{HATP}.  While this is true for each $\lambda=1,2,3,4,5$, it may be partly a consequence of our arbitrary choice of $\tau_2$.

Consider the case $\lambda=5$.  The areas under the three peaks of the density are respectively 0.0330104, 0.731102, 0.235888.  (These numbers are exact, not for the flashing Brownian ratchet, but for our random walk approximation to it, with $n=100$.)  If the peaks were symmetric, the mean displacement would be $(-4)(0.0330104)+(0)(0.731102)+(4)(0.235888)=0.811510$, but in fact the mean displacement is 0.678364 (again, an approximation) because of the asymmetry of each peak.

Table~\ref{computations-lambda} shows the effect of varying $\lambda$ on several statistics of interest.

\begin{table}[htb]
\caption{\label{computations-lambda}Computations for the $n$th random walk ($n=100$) approximating the flashing Brownian ratchet with $\alpha=1/4$, $L=4$, $\gamma=3\lambda/8$, $\tau_1=\tau_2=2.4$, and initial state 0, at time $\tau_1+\tau_2$, illustrating the effect of varying the strength $\gamma$ of the drift of the Brownian ratchet.\medskip}
\catcode`@=\active \def@{\hphantom{0}}
\begin{center}
\begin{footnotesize}
\begin{tabular}{cccc}
\noalign{\smallskip}
\hline
\noalign{\smallskip}
$\lambda$   &  areas of the three peaks   &  heights of the three peaks & mean \\
&&& displacement\\
\noalign{\smallskip}
\hline
\noalign{\smallskip}
@1 & $(0.0688267,0.701114,0.230060)$  &  $(0.0627471,0.566531,0.121751)$ & 0.0595931 \\
@2 & $(0.0500629,0.734941,0.214996)$  &  @$(0.0756255,1.06860,0.274227)$ & 0.297582@ \\
@3 & $(0.0400379,0.737033,0.222929)$  &  @$(0.0875995,1.59779,0.464698)$ & 0.496585@ \\
@4 & $(0.0354116,0.734036,0.230552)$  &  @$(0.1021090,2.11341,0.657213)$ & 0.611651@ \\
@5 & $(0.0330104,0.731102,0.235888)$  &  @@$(0.117836,2.60974,0.839352)$ & 0.678364@ \\
10 & $(0.0290537,0.723174,0.247772)$  &  @@$(0.197900,4.92657,1.68412)$@ & 0.809036@ \\
15 & $(0.0279536,0.719952,0.252094)$  &  @@$(0.273152,7.03601,2.45801)$@ & 0.853220@ \\
20 & $(0.0274363,0.718221,0.254343)$  &  @@$(0.342844,8.97601,3.17124)$@ & 0.875658@ \\
25 & $(0.0271326,0.717131,0.255736)$  &  @@$(0.407788,10.7794,3.83499)$@ & 0.889397@ \\
50 & $(0.0264993,0.714662,0.258839)$  &  @@$(0.695524,18.7599,6.77822)$@ & 0.919557@ \\
\noalign{\smallskip}
\hline
\end{tabular}
\end{footnotesize}
\end{center}
\end{table}

We might ask whether, as suggested in Figures~\ref{HATP} and \ref{Parrondo-Dinis}, the areas of the three peaks are equal to the corresponding areas under the normal curve.  The latter areas are 
$$
\Phi\bigg(\frac{-3}{\sigma}\bigg)=0.0264038,\quad \Phi\bigg(\frac{1}{\sigma}\bigg)-\Phi\bigg(\frac{-3}{\sigma}\bigg)=0.714294,\quad 1-\Phi\bigg(\frac{1}{\sigma}\bigg)=0.259303,
$$
where $\sigma=\sqrt{2.4}$.  It seems evident that this is true in the limit as $\lambda\to\infty$.  See Table~\ref{computations-lambda}.

We return to the case $\lambda=5$.  To get a sense of the rate of convergence in Theorem~\ref{theorem2}, we provide in Table~\ref{computations-n} computed values of several statistics as functions of $n$ $(=10, 20, 30, \ldots, 200)$.  

\begin{table}[hbt]
\caption{\label{computations-n}Computations for the $n$th random walk approximating the flashing Brownian ratchet with $\alpha=1/4$, $L=4$, $\gamma=3\lambda/8$, $\lambda=5$, $\tau_1=\tau_2=2.4$, and initial state 0, at time $\tau_1+\tau_2$, illustrating the rate of convergence in a special case of Theorem~\ref{theorem2}.\medskip}
\catcode`@=\active \def@{\hphantom{0}}
\begin{center}
\begin{footnotesize}
\begin{tabular}{cccc}
\noalign{\smallskip}
\hline
\noalign{\smallskip}
$n$   &  areas of the three peaks   &  heights of the three peaks & mean \\
&&& displacement\\
\noalign{\smallskip}
\hline
\noalign{\smallskip}
@10 & $(0.0279285,0.716249,0.255823)$  &  $(0.0733035,1.88015,0.669941)$ & 0.791225 \\
@20 & $(0.0309972,0.725965,0.243038)$  &  $(0.0931706,2.18234,0.728788)$ & 0.713194 \\
@30 & $(0.0318689,0.728297,0.239835)$  &  @$(0.102331,2.33873,0.768103)$ & 0.696690 \\
@40 & $(0.0322853,0.729350,0.238365)$  &  @$(0.107491,2.42843,0.791417)$ & 0.689617 \\
@50 & $(0.0325301,0.729952,0.237518)$  &  @$(0.110785,2.48602,0.806553)$ & 0.685678 \\
@60 & $(0.0326914,0.730343,0.236965)$  &  @$(0.113066,2.52599,0.817114)$ & 0.683162 \\
@70 & $(0.0328059,0.730618,0.236576)$  &  @$(0.114737,2.55531,0.824886)$ & 0.681414 \\
@80 & $(0.0328913,0.730821,0.236288)$  &  @$(0.116014,2.57774,0.830840)$ & 0.680129 \\
@90 & $(0.0329575,0.730978,0.236065)$  &  @$(0.117021,2.59543,0.835543)$ & 0.679144 \\
100 & $(0.0330104,0.731102,0.235888)$  &  @$(0.117836,2.60974,0.839352)$ & 0.678364 \\
110 & $(0.0330535,0.731203,0.235743)$  &  @$(0.118508,2.62156,0.842498)$ & 0.677731 \\
120 & $(0.0330894,0.731287,0.235623)$  &  @$(0.119073,2.63148,0.845140)$ & 0.677208 \\
130 & $(0.0331197,0.731358,0.235522)$  &  @$(0.119553,2.63993,0.847391)$ & 0.676768 \\
140 & $(0.0331457,0.731419,0.235435)$  &  @$(0.119967,2.64720,0.849331)$ & 0.676392 \\
150 & $(0.0331681,0.731471,0.235361)$  &  @$(0.120327,2.65354,0.851020)$ & 0.676068 \\
160 & $(0.0331878,0.731517,0.235295)$  &  @$(0.120644,2.65910,0.852504)$ & 0.675785 \\
170 & $(0.0332051,0.731557,0.235238)$  &  @$(0.120924,2.66403,0.853818)$ & 0.675537 \\
180 & $(0.0332205,0.731593,0.235187)$  &  @$(0.121174,2.66842,0.854990)$ & 0.675316 \\
190 & $(0.0332343,0.731625,0.235141)$  &  @$(0.121398,2.67236,0.856041)$ & 0.675120 \\
200 & $(0.0332467,0.731653,0.235100)$  &  @$(0.121600,2.67592,0.856990)$ & 0.674943 \\
\noalign{\smallskip}
\hline
\end{tabular}
\end{footnotesize}
\end{center}
\end{table}
\afterpage{\clearpage}

\section{Modeling Figure~\ref{HATP}, starting at stationarity}\label{modeling2}

By properties of diffusion processes with constant diffusion and gradient drift, the Brownian ratchet has an invariant measure $\pi$ of the form
\begin{equation}\label{invariant}
\pi(dx)=C\exp\{-2\gamma V(x)\}\,dx.
\end{equation}
The density of $\pi$ is a periodic function (with period $L$) whose maxima occur at the minima of the sawtooth potential.

\begin{theorem}
The Brownian ratchet with parameters $\alpha$, $L$, and $\gamma$ has an invariant measure $\pi$ of the form \eqref{invariant}.  The wrapped Brownian ratchet with the same parameters has an invariant measure of the same form, restricted to $[0,L)$.
\end{theorem}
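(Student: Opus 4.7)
The plan is to verify that $\pi(dx)=Ce^{-2\gamma V(x)}\,dx$ is invariant for the generator $\mathscr{L}f = \tfrac12 f''+\mu f'$ (acting on $C_c^\infty(\mathbf{R})$) by checking the stationarity equation $\int_{\mathbf{R}} \mathscr{L}f(x)\,\pi(dx)=0$ for every $f\in C_c^\infty(\mathbf{R})$. Since the martingale problem for $\mathscr{L}$ was shown to be well posed in the proof of Theorem~\ref{theorem1}, this weak condition characterizes invariance of $\pi$.

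The key computation is short once one observes that, although $V$ is only piecewise $C^1$, the candidate density $p(x):=Ce^{-2\gamma V(x)}$ is absolutely continuous on $\mathbf{R}$ (because $V$ is continuous) and satisfies
\begin{equation*}
p'(x) = -2\gamma V'(x)\,p(x) = 2\mu(x)\,p(x)
\end{equation*}
at every point where $V$ is differentiable, i.e., off the countable set of kinks $\{nL,\,n\alpha L:n\in\mathbf{Z}\}$. Integrating by parts once on the second-derivative term, using absolute continuity of $p$ and compact support of $f$, gives
\begin{equation*}
\int_{\mathbf{R}} \tfrac12 f''(x)\,p(x)\,dx = -\int_{\mathbf{R}} \tfrac12 f'(x)\,p'(x)\,dx = -\int_{\mathbf{R}} f'(x)\,\mu(x)\,p(x)\,dx,
\end{equation*}
which exactly cancels $\int f'(x)\mu(x)p(x)\,dx$. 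Hence $\int \mathscr{L}f\,d\pi = 0$.

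For the wrapped process $\bar X_t=\mathrm{mod}(X_t,L)$, I would observe that $p$ is $L$-periodic (since $V$ is), so its restriction to $[0,L)$ (with endpoints identified) defines a finite measure $\bar\pi$ on the circle. The generator of $\bar X_t$ acts on smooth periodic functions by the same expression $\tfrac12 f''+\mu f'$; repeating the integration by parts over $[0,L)$ now produces boundary contributions $\tfrac12[f'p - f p']_0^L$, which vanish by periodicity of $f$ and $p$. Normalizing by $C=(\int_0^L e^{-2\gamma V(x)}\,dx)^{-1}$ turns $\bar\pi$ into a probability measure.

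The main subtlety (and essentially the only one) is that $V'$ jumps at the kinks, so one must be sure that no singular (Dirac) contributions sneak into $p'$ when integrating by parts. This is handled precisely by the observation that $p$ itself is continuous across each kink (only its derivative jumps), so $p$ is absolutely continuous on $\mathbf{R}$ and the fundamental theorem of calculus applies without correction terms; the jumps in $\mu$ and in $p'$ are exactly matched so that the identity $p'=2\mu p$ holds Lebesgue-a.e., which is what the integration by parts requires.
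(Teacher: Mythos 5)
Your integration-by-parts calculation is essentially the same as the paper's: you rewrite the integrand in divergence form via the identity $p' = 2\mu p$ a.e., and the boundary terms vanish. The paper's version of this telescopes a sum over the periods $[nL,(n+1)L)$, whereas you integrate globally using the absolute continuity of $p$; these are equivalent. A small wording slip: $V$ being continuous is not what makes $p$ absolutely continuous (you need that $V$ is piecewise linear, hence Lipschitz), and in the wrapped case a single integration by parts produces the boundary term $\tfrac12[f'p]_0^L$, not $\tfrac12[f'p - fp']_0^L$ — both vanish by periodicity, so this is harmless.

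The genuine gap is in the framing, not the computation. You verify $\int \mathscr{L}f\,d\pi = 0$ for $f\in C_c^\infty(\mathbf{R})$ and assert that well-posedness of the martingale problem makes this ``weak condition characterize invariance of $\pi$.'' That inference is not automatic here. First, $\pi(dx)=Ce^{-2\gamma V(x)}\,dx$ on $\mathbf{R}$ is an \emph{infinite} measure, and the Echeverria-type result (Ethier--Kurtz, Theorem 4.9.17) that converts the weak stationarity identity plus well-posedness into an invariant distribution is a statement about probability measures; it does not apply to the unrestricted $\pi$. Second, $C_c^\infty(\mathbf{R})$ is not contained in the domain of the actual Feller generator of the Brownian ratchet: for a smooth $f$, $\mathscr{L}f = \tfrac12 f'' + \mu f'$ is discontinuous at the kinks unless $f'$ vanishes there, which is why the paper instead works with the domain $\mathscr{D}(\mathscr{L})$ of $C^1$ functions whose second derivatives jump so as to compensate the jumps in $\mu$, making $\mathscr{L}f$ continuous. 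The paper invokes Mandl's theorem to show that with this domain $\mathscr{L}$ generates a Feller semigroup on $C[-\infty,\infty]$, that both boundaries are natural, and that the compactly supported members of $\mathscr{D}(\mathscr{L})$ form a core; the telescoping computation is then carried out for precisely those core functions. This semigroup/core machinery is what licenses the passage from the integral identity to genuine invariance, and it is the ingredient missing from your sketch. For the wrapped process, where $\bar\pi$ is finite and the state space is compact, your route via well-posedness is salvageable, but for the unrestricted case you would need either the Mandl argument (as the paper uses) or another justification adapted to $\sigma$-finite invariant measures.
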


\begin{proof}
We use a different characterization of the Brownian ratchet.  We take $\mathscr{D}(\mathscr{L})$, the domain of $\mathscr{L}$, to be the space of real-valued $C^1$ functions $f$ on \textbf{R} with limits at $\pm\infty$ such that $f'$ is absolutely continuous and has a right derivative, denoted by $f''$, with $\mathscr{L}f$ continuous on \textbf{R} with limits at $\pm\infty$.  In particular, the discontinuities of $f''$ must be compatible with those of the drift coefficient $\mu$.  Thus,
\begin{align*}
\frac12 f''(nL-)+\frac{\gamma}{1-\alpha}f'(nL)&=\frac12 f''(nL)-\frac{\gamma}{\alpha}f'(nL),\\
\frac12 f''((n+\alpha)L-)-\frac{\gamma}{\alpha}f'((n+\alpha)L)&=\frac12 f''((n+\alpha)L)+\frac{\gamma}{1-\alpha}f'((n+\alpha)L),
\end{align*}
for all $n\in{\bf Z}$.  Mandl \cite[pp.\ 25, 38]{M68} showed that $\mathscr{L}$ generates a Feller semigroup on $C[-\infty,\infty]$.  Because both boundaries are natural, the Feller semigroup can be restricted to $\widehat C({\bf R})$, the subspace of continuous functions vanishing at $\pm\infty$.  Moreover, the subspace of $\mathscr{D}(\mathscr{L})$ consisting of functions with compact support is a core for the generator.
To confirm \eqref{invariant}, 
\begin{align}\label{invariant-proof}
\int_{\bf R}(\mathscr{L}f)(x)\,\pi(dx)&=\sum_{n=-\infty}^\infty\int_{nL}^{(n+1)L}[\textstyle{\frac12}f''(x)-\gamma V'(x)f'(x)]C\exp\{-2\gamma V(x)\}\,dx\nonumber\\
&=C\sum_{n=-\infty}^\infty\int_{nL}^{(n+1)L}\textstyle{\frac12}[f'(x)\exp\{-2\gamma V(x)\}]'\,dx\nonumber\\
&=(C/2)\sum_{n=-\infty}^\infty[f'((n+1)L)\exp\{-2\gamma V((n+1)L)\}\nonumber\\
&\hspace{4cm}{}-f'(nL)\exp\{-2\gamma V(nL)\}]\nonumber\\
&=(C/2)\sum_{n=-\infty}^\infty[f'((n+1)L)-f'(nL)]\nonumber\\
&=0
\end{align}
for every $f\in\mathscr{D}(\mathscr{L})$ with compact support (the sum telescopes), since $V(nL)=0$ for all $n\in{\bf Z}$.

For the second assertion, we take $\mathscr{D}(\mathscr{L})$ to be the space of real-valued $C^1$ functions $f$ on the circle $[0,L)$ such that $f'$ is absolutely continuous and has a right derivative, denoted by $f''$, with $\mathscr{L}f$ continuous on the circle $[0,L)$.  Thus, $f(0)=f(L-)$, $f'(0)=f'(L-)$,  
\begin{align*}
\frac12 f''(0)-\frac{\gamma}{\alpha}f'(0)&=\frac12 f''(L-)+\frac{\gamma}{1-\alpha}f'(L-),\\
\frac12 f''(\alpha L-)-\frac{\gamma}{\alpha}f'(\alpha L)&=\frac12 f''(\alpha L)+\frac{\gamma}{1-\alpha}f'(\alpha L).
\end{align*}
Finally, $\int_0^L(\mathscr{L}f)(x)\,\pi(dx)=0$ as in \eqref{invariant-proof} except with the sums over $n$ replaced by their $n=0$ terms.
\end{proof}

For both invariant measures (unrestricted and restricted) we expect there is a uniqueness result but we currently lack a proof.  Notice that the mean drift, with respect to the invariant probability measure, is equal to 
\begin{align*}
\frac{\int_0^L \mu(x)\,\exp\{-2\gamma V(x)\}\,dx}{\int_0^L \exp\{-2\gamma V(x)\}\,dx}&=\frac{-\gamma\int_0^L V'(x)\,\exp\{-2\gamma V(x)\}\,dx}{\int_0^L \exp\{-2\gamma V(x)\}\,dx}\\
&=\frac{\int_0^L [\exp\{-2\gamma V(x)\}]'\,dx}{2\int_0^L \exp\{-2\gamma V(x)\}\,dx}\\
&=\frac{\exp\{-2\gamma V(L)\}-\exp\{-2\gamma V(0)\}}{2\int_0^L \exp\{-2\gamma V(x)\}\,dx}\\
&=0
\end{align*}
since $V(L)=V(0)=0$.  Thus, the mean drift is 0. 

Denote the flashing Brownian ratchet at time $t$, starting from $x\in{\bf R}$ at time 0, by $Y_t^x$, and the wrapped flashing Brownian ratchet at time $t$, starting from $x\in[0,L)$ at time 0, by $\bar Y_t^x$.  Then the one-step transition function
\begin{equation}\label{transition-function}
\bar P(x,\cdot):=\P(\bar Y_{\tau_1+\tau_2}^x\in\cdot)
\end{equation}
for a continuous-state Markov chain has a stationary distribution $\bar\pi$.  (Existence is automatic from the Feller property and the compactness of the state space; recall that the endpoints of $[0,L)$ are identified.  Nevertheless, no analytical formula is known, and uniqueness is expected but unproved.) The mean displacement $\bar\mu$ of the flashing Brownian ratchet over the time interval $[0,\tau_1+\tau_2]$, starting from the stationary distribution $\bar\pi$, namely
\begin{align*}
\bar\mu&:=\int_0^L\E[Y_{\tau_1+\tau_2}^x-Y_0^x]\,\bar\pi(dx)\\
&\phantom{:}=\int_{-(1-\alpha)L}^{\alpha L}\E[Y_{\tau_1+\tau_2}^x-Y_0^x]\,\bar\pi(dx)
\end{align*}
is a statistic of primary interest.  The second equality is a consequence of the periodicity of the integrand (with period $L$) and the convention that we do not distinguish notationally between $\bar\pi$ and its image under the mapping
$$
x\mapsto\begin{cases}x&\text{if $0\le x<\alpha L$,}\\ x-L&\text{if $\alpha L\le x< L$.}\end{cases}
$$
The advantage of modifying $\bar\pi$ in this way is that, when regarded as a measure on ${\bf R}$, it becomes unimodal instead of U-shaped.

We propose to approximate $\bar\mu$ as follows.  The integrand can be estimated as before, the only difference being that the starting point of the flashing Brownian ratchet is $x$, not 0.  The stationary distribution $\bar\pi$ of the one-step transition function \eqref{transition-function} can be approximated by the stationary distribution of the finite Markov chain whose one-step transition matrix has the form 
$$
P(i,j):=\P(\bar Y_n(n^2(\tau_1+\tau_2))=j\mid \bar Y_n(0)=i),\quad i,j=0,1,\ldots,Ln-1.
$$
A small technical issue, if $Ln$ is even,  is that this Markov chain fails to be irreducible if $n^2(\tau_1+\tau_2)$ is even, in which case we replace it by $n^2(\tau_1+\tau_2)+1$.  Then it becomes irreducible and there is a unique stationary distribution.  The black curve of the first panel of Figure~\ref{fBrstat-fig} is an approximation to the density of $\bar\pi$ with support $[-3,1)$ instead of $[0,4)$. Starting from the approximate $\bar\pi$ at time 0, the second and third panels show the approximations to the density at times $\tau_1$ and $\tau_1+\tau_2$, respectively. Figure~\ref{fBrstat-fig} can be regarded as a more accurate version of Figures~\ref{HATP} and \ref{Parrondo-Dinis}.  Computations show that
\begin{equation}\label{mubar}
\bar\mu=0.684827,
\end{equation}
which is slightly larger than the corresponding number in Table~\ref{computations-lambda}. 

\begin{figure}[htb]
\centering
\includegraphics[width=3.58in]{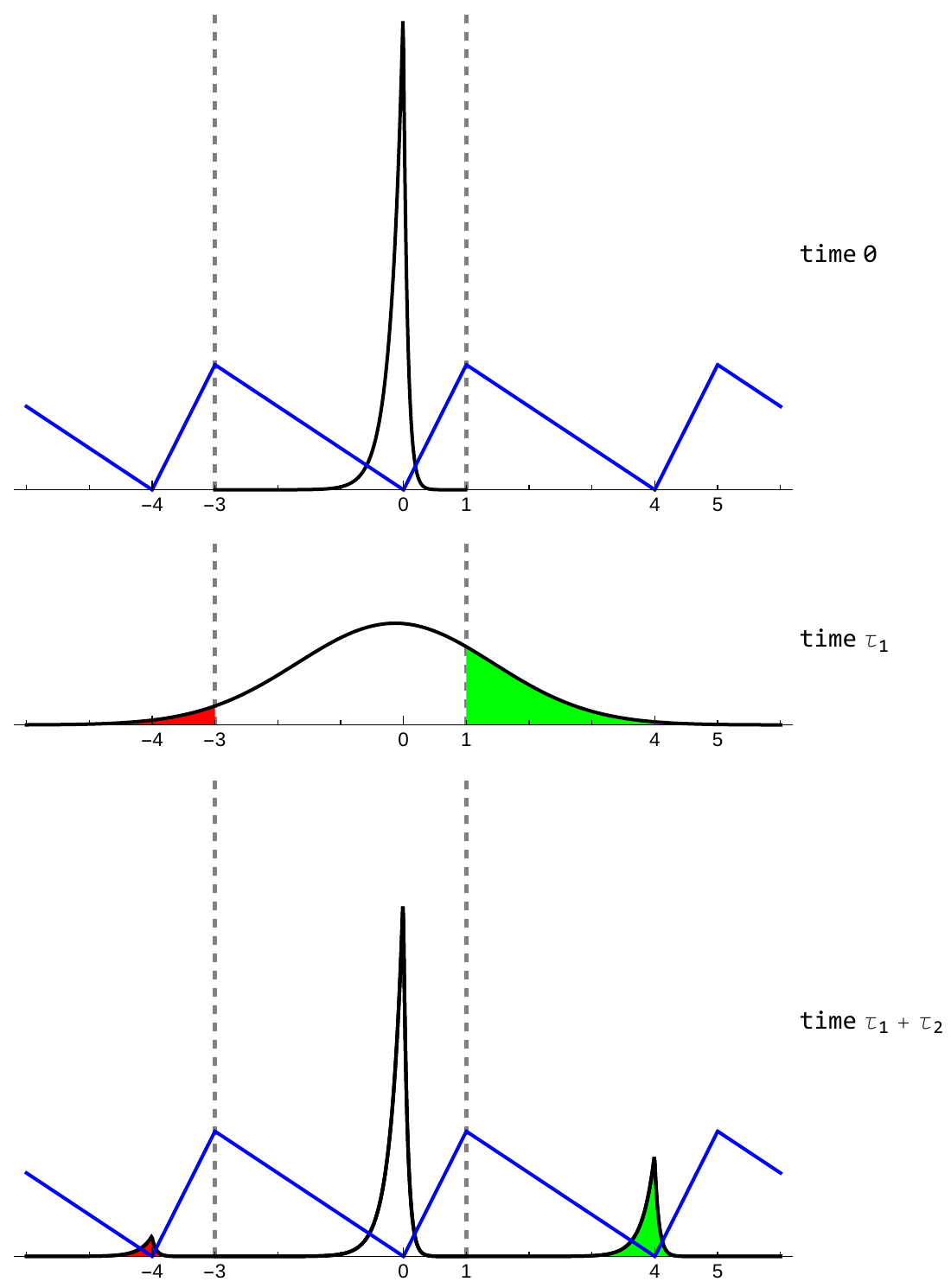}
\caption{\label{fBrstat-fig}We mathematically model the full Figure~\ref{HATP}.  Starting from the stationary distribution $\bar\pi$ with support $[-3,1)$, a Brownian motion runs for time $\tau_1=2.4$.  Then, starting from where the Brownian motion ended, a Brownian ratchet with $\alpha=1/4$, $L=4$, $\gamma=3\lambda/8$, and $\lambda=5$, runs for time $\tau_2=2.4$.  The black curves in all three panels are based on the random walk approximation with $n=100$.  The blue curves represent the sawtooth potential.  The vertical axes in the first and third panels are comparable, whereas the vertical axis in the second panel has been stretched for clarity.}
\end{figure}
\afterpage{\clearpage}

Because the evaluation in \eqref{mubar} is computationally intensive, it would be a challenging numerical optimization problem to determine the values of $\tau_1$ and $\tau_2$ that maximize the long-term mean displacement per unit time, $\bar\mu/(\tau_1+\tau_2)$.  They would depend on $\alpha$, $L$, and $\gamma$.

A problem that we hope to address in the near future is to establish a strong law of large numbers for flashing Brownian ratchet increments, perhaps analogous to our earlier SLLN (Ethier and Lee \cite{EL09}) for the sequence of Parrondo-game profits.

\section*{Acknowledgments}

The work of S. N. Ethier was partially supported by a grant from the Simons Foundation (429675). 
The work of J. Lee was supported by Basic Science Research Program through the National Research Foundation of Korea (NRF) funded by the Ministry of Science, ICT \& Future Planning (NRF-2017R1A2B1007089).

\end{document}